\documentclass[12pt,reqno, a4paper]{amsart}

\usepackage{amsmath,amsthm,amsfonts,amssymb,amsxtra,appendix,bookmark,dsfont,bm,color}
\usepackage[shortlabels]{enumitem}
\usepackage{graphicx}
\usepackage[utf8]{inputenc}

\setlength{\voffset}{-.7truein}
\setlength{\textheight}{8.8truein}
\setlength{\textwidth}{6truein}
\setlength{\hoffset}{-.5truein}

\usepackage{tcolorbox}
\theoremstyle{plain}
\newtheorem{theorem}{Theorem}
\newtheorem{lemma}[theorem]{Lemma}

\theoremstyle{remark}
\newtheorem{remark}{Remark}
\theoremstyle{definition}

\newcounter{step} 

\usepackage{etoolbox}
\AtBeginEnvironment{proof}{\setcounter{step}{0}}





\def\bq{\begin{eqnarray}}
\def\eq{\end{eqnarray}}
\def\bqq{\begin{align*}}
\def\eqq{\end{align*}}

\def\nn{\nonumber}

\def\eps{\varepsilon}
\def\wto{\rightharpoonup}

\renewcommand{\Re}{\operatorname{Re}}

\newcommand\1{{\ensuremath {\mathds 1} }}


\def\R {\mathbb{R}}

\def\R {\mathbb{R}}

\def\d{{\, \rm d}}




\title{Minimizing sequences of Sobolev inequalities revisited}

\author[C. Dietze]{Charlotte Dietze}
\address[C. Dietze]{Department of Mathematics, LMU Munich, Theresienstrasse 39, 80333 Munich, Germany;\newline
 Institut des Hautes Etudes Scientifiques, 35 Route de Chartres, Bures-sur-Yvette, France} 
\email{dietze@math.lmu.de}

\author[P.T. Nam]{Phan Th\`anh Nam}
\address[P.T. Nam]{Department of Mathematics, LMU Munich, Theresienstrasse 39, 80333 Munich, Germany; \newline Munich Center for Quantum
Science and Technology (MCQST),  Schellingstr. 4, 80799 Munich, Germany} 
\email{nam@math.lmu.de}

\begin{document}

\begin{abstract} We give a new proof of the compactness of minimizing sequences of the Sobolev inequalities in the critical case. Our approach relies on a simplified version of the concentration-compactness principle, which does not require any refinement of the Sobolev embedding theorem. 
\end{abstract}

\maketitle


\section{Introduction}

The classical Sobolev inequality \cite{Sobolev-38} asserts that for all $d\ge 2$ and $1<p<d$, 
\begin{align}\label{eq:Sobolev}
\int_{\R^d} |\nabla u(x) | ^p \d x \ge S_{d,p} \left( \int_{\R^d} |u(x)|^{p^*} \d x \right)^{p/p^*}, \quad \forall u \in \dot{W}^{1,p}(\R^d)
\end{align}
where $p^*=dp/(d-p)$ and the constant $S_{d,p} >0$ is independent of $u$. The sharp constant $S_{p,d}$ in \eqref{eq:Sobolev} was determined by Rosen \cite{Rosen-71} for $p=2$ and $d=3$, and this was subsequently extended to the general case by Aubin \cite{Aubin-76} and Talenti \cite{Talenti-76}. For $p=2$, the determination of Sobolev optimizers is also a consequence of Lieb's work on the Hardy--Littlewood--Sobolev inequality \cite{Lieb-83}. These results crucially rely on rearrangement inequalities, which allow to restrict the problem to radially symmetric decreasing functions. There are also elegant free-rearrangement methods which allow to determine directly all optimizers \cite{CarCarLos-10,FraLie-10,FraLie-12,FraLie-12b}.

In this paper, we are interested in not only the existence of optimizers  but also the compactness of minimizing sequences modulo relevant symmetries of the variational problem
\begin{align*}
S_{d,p} = \inf \left\{  \frac{ \| \nabla u\|_{L^p(\R^d)}^p } {\|u\|_{L^{p^*}(\R^d)}^p}  \,|\, u \in \dot{W}^{1,p}(\R^d),\, u\not\equiv 0 \right\}, 
\end{align*}
which is important in many problems of nonlinear and stability analysis. In this context, Lions \cite{Lions-85} established the following foundational result using the concentration-compactness method.

\begin{theorem} \label{thm:1} For $d\in \mathbb{N}$ and $1<p<d$, the Sobolev inequality \eqref{eq:Sobolev} has an optimizer $u \in \dot{W}^{1,p}(\R^d)$. Moreover, any minimizing sequence in $u \in \dot{W}^{1,p}(\R^d)$ is, up to translations and dilations, relatively compact in $\dot{W}^{1,p}(\R^d)$. 
\end{theorem}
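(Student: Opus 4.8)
The plan is to run the concentration-compactness machinery on the sequence of measures $\mu_n = |\nabla u_n|^p\,dx$ (after normalizing $\|u_n\|_{L^{p^*}} = 1$, so that $\int d\mu_n \to S_{d,p}$), while avoiding the usual refined Sobolev embedding by instead exploiting the exact scaling structure of the problem. First I would normalize a minimizing sequence $u_n$ by the scaling $u_n \mapsto \lambda^{(d-p)/p} u_n(\lambda(\,\cdot\,) - y)$, which preserves both sides of \eqref{eq:Sobolev}, to arrange that the concentration function $Q_n(r) = \sup_{y\in\R^d}\int_{B(y,r)} |u_n|^{p^*}\,dx$ satisfies $Q_n(1) = \tfrac12$ (say); this uses continuity of $Q_n$ in $r$, $Q_n(0)=0$, and $Q_n(\infty)=1$. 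Passing to a subsequence, $u_n \wto u$ weakly in $\dot W^{1,p}(\R^d)$, and by Rellich–Kondrachov $u_n \to u$ strongly in $L^{p^*}_{\loc}$.

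The three-way alternative of concentration-compactness must be ruled down to the compact case. \emph{Vanishing} is excluded because $Q_n(1) = \tfrac12 \not\to 0$. \emph{Dichotomy} is the main obstacle and the place where I expect to replace the refined embedding by a scaling/Brezis–Lieb argument. Concretely, if mass splits as $1 = \alpha + (1-\alpha)$ with $\alpha\in(0,1)$, one produces cutoffs so that $u_n = v_n + w_n + o(1)$ with $\|v_n\|_{L^{p^*}}^{p^*}\to\alpha$, $\|w_n\|_{L^{p^*}}^{p^*}\to 1-\alpha$, and the supports of $v_n,w_n$ moving infinitely far apart; then the Brezis–Lieb lemma gives $\int|\nabla u_n|^p \ge \int|\nabla v_n|^p + \int|\nabla w_n|^p + o(1)$, and applying \eqref{eq:Sobolev} to $v_n$ and $w_n$ separately yields
\[
S_{d,p} \ \ge\ S_{d,p}\bigl(\alpha^{p/p^*} + (1-\alpha)^{p/p^*}\bigr),
\]
which is strictly impossible for $\alpha\in(0,1)$ since $p/p^* < 1$ makes $t\mapsto t^{p/p^*}$ strictly subadditive. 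This is the strict-subadditivity (binding) inequality, and it is exactly where the homogeneity of the Sobolev problem lets us sidestep any quantitative refinement of the embedding.

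Having eliminated vanishing and dichotomy, compactness holds: for every $\eps>0$ there is $R$ and (after translating) $\int_{B(0,R)}|u_n|^{p^*} \ge 1-\eps$. Combined with $u_n\to u$ in $L^{p^*}_{\loc}$ this gives $u_n \to u$ strongly in $L^{p^*}(\R^d)$ with $\|u\|_{L^{p^*}} = 1$, hence $u\not\equiv 0$. Weak lower semicontinuity of the Dirichlet energy then gives $\int|\nabla u|^p \le \liminf \int|\nabla u_n|^p = S_{d,p}$, while \eqref{eq:Sobolev} applied to $u$ gives the reverse inequality; so $u$ is an optimizer and $\int|\nabla u_n|^p \to \int|\nabla u|^p$. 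Finally, strong convergence in $\dot W^{1,p}$ follows from weak convergence together with convergence of norms, using uniform convexity of $L^p$ for $1<p<\infty$ (Clarkson), applied to $\nabla u_n \wto \nabla u$ with $\|\nabla u_n\|_{L^p}\to\|\nabla u\|_{L^p}$. Tracking back through the normalization shows the original minimizing sequence is relatively compact modulo translations and dilations, as claimed.
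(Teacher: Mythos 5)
Your normalization (fixing the concentration function $Q_n(1)=\tfrac12$), your exclusion of vanishing, and your strict-subadditivity argument against classical dichotomy all match the spirit of the paper. However, there is a genuine gap in the final passage from ``no vanishing, no dichotomy'' to strong $L^{p^*}$ convergence, and it is precisely the gap that makes the critical case hard.

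You assert that ``by Rellich--Kondrachov $u_n\to u$ strongly in $L^{p^*}_{\rm loc}$,'' and then combine this with tightness to get $u_n\to u$ in $L^{p^*}(\R^d)$. This is false: the Rellich--Kondrachov theorem gives compactness of $W^{1,p}$ into $L^q_{\rm loc}$ only for \emph{subcritical} $q<p^*$; the embedding into $L^{p^*}_{\rm loc}$ is continuous but \emph{not} compact. The failure mode you are missing is \emph{delta-type concentration}: a sequence such as $u_n(x)=\lambda_n^{(d-p)/p}U(\lambda_n x)$ with $\lambda_n\to\infty$ is bounded in $\dot W^{1,p}$, has tight $L^{p^*}$ mass, is not vanishing and does not dichotomize by mass escaping to infinity, yet $u_n\wto 0$ weakly and $|u_n|^{p^*}\,dx\to\delta_0$. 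Your three-way alternative (compactness / vanishing / dichotomy with ``supports moving far apart'') does not cover this scenario, so after ruling out vanishing and large-scale dichotomy you cannot conclude compactness of $|u_n|^{p^*}$ in $L^1$, let alone strong $L^{p^*}$ convergence of $u_n$.

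The paper closes exactly this gap in two places. First, it rules out concentration by a ``generalized dichotomy'' \eqref{eq:p-un-gen-dichotomy}: the annulus where the cutoff lives has radius $R_n$, and both $R_n\to\infty$ (classical dichotomy, loss of tightness) and $R_n\to 0$ (delta concentration) are treated in one stroke using the strict convexity inequality \eqref{eq:strict-binding}. Second, even after one knows tightness and absence of delta concentration, passing to strong $L^{p^*}$ convergence requires an additional argument — this is Lemma~\ref{lem:concentration}, which is a localization argument using the Sobolev inequality at small scales together with the fact that $v_n\to 0$ in $L^p_{\rm loc}$ (a genuine Rellich statement, since $p<p^*$). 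Your proof would become correct if you (i) extend the dichotomy argument to cover concentration at shrinking scales, and (ii) replace the invalid Rellich--Kondrachov claim by something like Lemma~\ref{lem:concentration} to upgrade ``tight, not concentrating, weak limit $u$'' into ``$u_n\to u$ in $L^{p^*}$.'' A minor secondary point: the energy splitting $\int|\nabla u_n|^p\ge\int|\nabla v_n|^p+\int|\nabla w_n|^p+o(1)$ is not an application of the Brezis--Lieb lemma but a cutoff/IMS-type computation; for $p\ne 2$ the Brezis--Lieb identity for gradients does \emph{not} hold in general (cf.\ the discussion around \eqref{eq:S-p2-1}), and the paper deliberately avoids it by using a partition of unity with $\chi_n^p+\eta_n^p=1$.
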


Our aim is to give a short proof of Theorem \ref{thm:1} by introducing a  simplified version of Lions' concentration-compactness principle. Lions’s original argument in \cite{Lions-85} addresses not only minimizing sequences but also general sequences which converge weakly in $\dot{W}^{1,p}(\R^d)$. The minimizing property is used only at the final stage, at the level of limiting measures. The concentration-compactness method has been further developed to investigate profile decompositions of weakly convergent sequences, beginning with the works of Gérard \cite{Gerard-98} and Jaffard \cite{Jaffard-99}. In contrast, our approach employs the minimizing property at an earlier stage, allowing us to establish tightness and exclude delta-type concentration from the beginning, thereby significantly simplifying the analysis.

\medskip
In Section \ref{sec:p=2}, we will consider the case $p=2$ where the Hilbert space structure simplifies many aspects of the problem. In this case, the main task is to show that any minimizing sequence ${u_n}$, up to translations and dilations, has a {\em non-trivial} weak limit $u_0$ in $\dot{H}^1(\R^d)$. This task is typically achieved by refining the Sobolev inequality; see e.g. \cite{GerMeyOru-97,PalPis-14} 
and the lecture notes \cite[Section 6]{Lewin-10}, \cite[Section 3]{Frank-11}, \cite[Section 4.2]{KilVis-13} (see also \cite[Appendix B]{LenLew-11} for the fractional case). Our proof, however, requires no additional input beyond the Sobolev compact embedding $L^2_{\rm loc} (\R^d) \subset \dot{H}^1(\R^d)$, which is sufficient to enable a localization method and rules out the possibility of a zero limit. For the latter, we use a convexity argument based on the elementary inequality 
\begin{align}\label{eq:strict-binding}
a^{p/p^*}+b^{p/p^*}  \ge  (1-\eps)^{-p/p^*} (a+b)^{p/p^*},\quad \forall a,b\in [\delta,1],
\end{align}
where the constant $\eps\in (0,1)$ depends only on $p/p^* \in (0,1)$ and $\delta\in (0,1/2)$.  Once we know $u_0 \neq 0$, the conclusion follows from standard arguments. In particular, we can use the weak convergence $\nabla u_n\wto \nabla u_0$ in $L^2(\R^d)$ to split the kinetic energy \begin{align}
\label{eq:S-p2-1}
\int_{\R^d} \left(  |\nabla u_n|^2 - |\nabla  u_0|^2 - |\nabla (u_n-u_0)|^2  \right) = 2\Re \langle \nabla u_0, \nabla(u_n-u_0)\rangle_{L^2} \to 0
\end{align}
which implies that $u_n-u_0$ is also a minimizing sequence by the same convexity argument. Thus we conclude $u_n-u_0\to 0$ strongly by the previous argument. 

\medskip
In Section \ref{sec:general-p}, we give a proof of Theorem \ref{thm:1} for the general case $1<p<d$. This is significantly more challenging since even if we know that the minimizing sequence $u_n$ has a weak limit  $u_0\ne 0$ in $\dot W^{1,p}$, it is not obvious why $u_0$ is a minimizer. The reason is that the extension of \eqref{eq:S-p2-1} does not hold under the weak convergence $\nabla u_n\wto \nabla u_0$ in $L^p(\R^d)$, see e.g. \cite[Eq. (4.23)]{KilVis-13}. It requires a deeper revision of the concentration-compactness argument. In particular, we have the following result:

\begin{lemma}\label{lem:concentration} Let $1<p<d$. Assume that $v_n \wto 0$ weakly in $\dot{W}^{1,p}(\R^d)$. Then at least one of the following holds true: $|v_n|^{p^*}$ vanishes locally, namely
\begin{align}\label{eq:vn-Sobolev-phi-conclusion-1}
\limsup_{n\to \infty}\int_{B_R} |v_n|^{p^*} =0, \quad \forall R>0,  
\end{align}
or there exists a delta-type concentration, namely 
\begin{align}\label{eq:p-vn-delta} 
\lim_{r\to 0} \limsup_{n\to \infty} \sup_{x\in \R^d}\int_{B_r(x)} |v_n|^{p^*} >0. 
\end{align} 
\end{lemma}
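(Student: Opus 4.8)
The plan is to argue by contradiction: suppose that neither \eqref{eq:vn-Sobolev-phi-conclusion-1} nor \eqref{eq:p-vn-delta} holds. Since $\limsup_{n\to\infty}\sup_{x\in\R^d}\int_{B_r(x)}|v_n|^{p^*}$ is non-decreasing in $r$, the failure of \eqref{eq:p-vn-delta} means this quantity tends to $0$ as $r\to0$, while the failure of \eqref{eq:vn-Sobolev-phi-conclusion-1} yields $R_0>0$, $\delta_0>0$ and a subsequence (not relabelled) with $\int_{B_{R_0}}|v_n|^{p^*}\ge\delta_0$ for all $n$. As $v_n\wto 0$ in $\dot W^{1,p}(\R^d)$, the sequence is bounded there, so by \eqref{eq:Sobolev} the measures $|\nabla v_n|^p\d x$ and $|v_n|^{p^*}\d x$ have uniformly bounded mass; passing to a further subsequence we may assume $|\nabla v_n|^p\d x\wto\mu$ and $|v_n|^{p^*}\d x\wto\nu$ weakly-$*$ as finite Radon measures on $\R^d$. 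The usual semicontinuity of weak-$*$ convergence then gives, on the one hand, $\nu(\overline{B_{R_0}})\ge\delta_0>0$, so that $\nu\neq 0$; and, on the other hand, for every $x\in\R^d$ and $r>0$,
\[
\nu\big(\overline{B_r(x)}\big)\le\nu\big(B_{2r}(x)\big)\le\liminf_{n\to\infty}\int_{B_{2r}(x)}|v_n|^{p^*}\le\limsup_{n\to\infty}\sup_{y\in\R^d}\int_{B_{2r}(y)}|v_n|^{p^*},
\]
and the last expression tends to $0$ as $r\to0$ by the failure of \eqref{eq:p-vn-delta}; hence $\nu(\{x\})=0$ for all $x$, i.e.\ $\nu$ is non-atomic.

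The heart of the argument is to relate $\nu$ and $\mu$ through a localized Sobolev inequality, with no refinement of \eqref{eq:Sobolev} required. For $\phi\in C_c^\infty(\R^d)$ with $0\le\phi\le1$, apply \eqref{eq:Sobolev} to $\phi v_n\in\dot W^{1,p}(\R^d)$ and expand $\nabla(\phi v_n)=\phi\nabla v_n+v_n\nabla\phi$; since $\|v_n\nabla\phi\|_{L^p}\le\|\nabla\phi\|_{L^\infty}\|v_n\|_{L^p(\supp\phi)}\to 0$ by the compact embedding $\dot W^{1,p}(\R^d)\hookrightarrow L^p_{\loc}(\R^d)$ applied to $v_n\wto 0$ (here $p<p^*$ is essential), letting $n\to\infty$ gives
\[
S_{d,p}\left(\int_{\R^d}\phi^{p^*}\d\nu\right)^{p/p^*}\le\int_{\R^d}\phi^p\d\mu .
\]
Approximating the indicator of an open set from below by such $\phi$, and then using outer regularity, upgrades this to $\nu(E)^{p/p^*}\le S_{d,p}^{-1}\mu(E)$ for every Borel set $E\subset\R^d$. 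In particular $\nu\ll\mu$, say with density $f\ge0$, and for $\mu$-a.e.\ $x$ the differentiation theorem for Radon measures combined with this inequality gives
\[
f(x)=\lim_{r\to0}\frac{\nu(B_r(x))}{\mu(B_r(x))}\le S_{d,p}^{-1}\lim_{r\to0}\nu\big(B_r(x)\big)^{1-p/p^*}.
\]
Since $\nu$ is non-atomic, $\nu(B_r(x))\to 0$ as $r\to0$, and since $1-p/p^*>0$ we conclude $f=0$ $\mu$-a.e., hence $\nu=0$ --- contradicting $\nu\neq0$.

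I expect the only delicate points to be the passage from the inequality for smooth $\phi$ to the inequality for arbitrary Borel sets and the concluding measure-differentiation step, but both are routine once the localized Sobolev estimate is in hand; the genuinely substantive inputs are just the sharp Sobolev inequality \eqref{eq:Sobolev}, used without refinement, and the compact embedding $\dot W^{1,p}(\R^d)\hookrightarrow L^p_{\loc}(\R^d)$. It is worth noting that the hypothesis $p<p^*$ enters twice: it makes the cross term $v_n\nabla\phi$ negligible through compactness, and it makes $1-p/p^*>0$, which is exactly what forces the Radon--Nikodym density to vanish.
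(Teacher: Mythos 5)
Your proof is correct, but it follows essentially the classical route of Lions \cite{Lions-85}: pass to weak-$*$ limits $|\nabla v_n|^p\,\rd x \wto \mu$ and $|v_n|^{p^*}\,\rd x \wto \nu$ as Radon measures, derive the reverse H\"older-type inequality $S_{d,p}\,\nu(E)^{p/p^*}\le \mu(E)$ from the localized Sobolev inequality, and then invoke the Radon--Nikodym/Besicovitch differentiation theorem together with the non-atomicity of $\nu$ to conclude $\nu=0$. The paper's proof is deliberately different in spirit: it never passes to limit measures at all. Instead it fixes $\varphi_{r,y}(x)=\varphi((x-y)/r)$ and, after the same localized Sobolev estimate, \emph{averages the resulting inequality over the center $y\in B_{2R}$} and applies Fubini. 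This collapses the local bounds into the single quantitative estimate
\begin{align*}
\int_{B_R}|v_n|^{p^*}\lesssim_d \eps_{r,n}^{1-p/p^*}\left(\int_{B_{4R}}|\nabla v_n|^p+r^{-p}\int_{B_{4R}}|v_n|^p\right),
\end{align*}
where $\eps_{r,n}=\sup_{y}\int_{B_r(y)}|v_n|^{p^*}$, from which one lets $n\to\infty$ (using $L^p_{\loc}$ compactness) and then $r\to0$. Both arguments hinge on the same two inputs --- the Sobolev inequality \eqref{eq:Sobolev} and the compact embedding into $L^p_{\loc}$ --- and both exploit the gap $1-p/p^*>0$. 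What the paper's averaging argument buys is the removal of all measure-theoretic machinery (weak-$*$ compactness of Radon measures, Portmanteau inequalities, outer regularity, Besicovitch differentiation): it is closer to being a one-step computation and is the simplification advertised in the introduction and in the Remark following the proof of Theorem \ref{thm:1}. Your version is a perfectly valid alternative, but it reproduces precisely the ``limits as measures'' step that the paper is organized to avoid.
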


This result is a simplified version of \cite[Lemma I.1]{Lions-85}, for which we can provide a simple proof. To arrive at the conclusion, an additional argument is required  to establish the tightness and the absence of delta-type concentration for minimizing sequences. Both of these can be derived using the convexity argument via \eqref{eq:strict-binding} again.

\medskip

Our method can be extended to handle the fractional Sobolev inequalities 
\begin{align}\label{eq:Sobolev-fractional}
\| u\|_{\dot W^{s,p}(\R^d)}^p \ge S_{d,s,p} \left( \int_{\R^d} |u(x)|^{p^*} \d x \right)^{p/p^*}, \quad \forall u \in \dot{W}^{s,p}(\R^d)
\end{align}
for all $d\ge 1$, $d>s>0$ and $d/s>p>1$. Here $p^*=dp/(d-ps)$ and 
$$
\| u\|_{\dot W^{s,p}(\R^d)}^p = \begin{cases} \int_{\R^d} |D^m u(x)|^p \d x \quad \text { if } s=m\in \mathbb{N}, \\
 \int_{\R^d}\int_{\R^d} \dfrac{|D^m u(x)-D^m u(y)|^p}{|x-y|^{d+p\sigma}} \d x \d y \quad \text{ if } s=m+\sigma, m \in \mathbb{N}, \sigma\in (0,1).    
\end{cases}
$$
We denote $D^m u= (D^\alpha u)_{|\alpha|=m}$ for multi-indexes $\alpha\in \{0,1,...\}^d$ and $|D^m u|$ can be any norm in this product vector space (e.g., the Euclidean norm $(\sum_{|\alpha|=m} |D^\alpha u|^2)^{1/2}$).

\medskip

In Section  \ref{sec:general-p-s} we will prove the following result. 
 
\begin{theorem} \label{thm:2} For $d\in \mathbb{N}$, $d>s>0$ and $d/s>p>1$, the Sobolev inequality \eqref{eq:Sobolev-fractional} has an optimizer $u \in \dot{W}^{s,p}(\R^d)$. Moreover, any minimizing sequence in $\dot{W}^{s,p}(\R^d)$ is, up to translations and dilations, relatively compact in $\dot{W}^{s,p}(\R^d)$. 
\end{theorem}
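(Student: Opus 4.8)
The plan is to mirror the strategy used for Theorem \ref{thm:1}, adapting each step to the fractional setting, with Lemma \ref{lem:concentration} serving as the main structural input. Since the paper states that Lemma \ref{lem:concentration} (and its proof) carries over, and since the compact embedding $W^{s,p}_{\loc}(\R^d) \hookrightarrow \dot W^{s,p}(\R^d)$ holds for the same range of parameters, the overall architecture should transfer without essential new ideas. I would organize the proof into the following steps.

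First I would fix a minimizing sequence $u_n$ for \eqref{eq:Sobolev-fractional}, normalized so that $\int_{\R^d}|u_n|^{p^*} = 1$ and $\|u_n\|_{\dot W^{s,p}}^p \to S_{d,s,p}$. Using the dilation invariance $u \mapsto \lambda^{(d-ps)/p} u(\lambda \dotv)$ and the translation invariance of both sides of \eqref{eq:Sobolev-fractional}, I would normalize the concentration function: choose $\lambda_n$ and $x_n$ so that $\sup_{x}\int_{B_1(x)}|u_n|^{p^*} = \int_{B_1(0)}|u_n|^{p^*} = \tfrac12$ (or another fixed fraction), after replacing $u_n$ by the rescaled and translated function. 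The sequence is bounded in $\dot W^{s,p}$, so along a subsequence $u_n \wto u_0$ weakly in $\dot W^{s,p}$, and by the compact local embedding $u_n \to u_0$ strongly in $L^{p^*}_{\loc}$; in particular $\int_{B_1(0)}|u_0|^{p^*} = \tfrac12 > 0$, so $u_0 \neq 0$. Next, set $v_n = u_n - u_0$, so $v_n \wto 0$ in $\dot W^{s,p}$. Apply Lemma \ref{lem:concentration} to $v_n$: the no-delta-concentration alternative must hold, because the concentration function of $v_n$ at scale $r \le 1$ is controlled (via $v_n \to 0$ in $L^{p^*}_{\loc}$ combined with the normalization of $u_n$) so that \eqref{eq:p-vn-delta} fails; hence $|v_n|^{p^*}$ vanishes locally, i.e. \eqref{eq:vn-Sobolev-phi-conclusion-1} holds. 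Combined with tightness — which I would establish using the convexity inequality \eqref{eq:strict-binding} applied to the split of mass $\int|u_n|^{p^*} = \int_{B_R}|u_n|^{p^*} + \int_{B_R^c}|u_n|^{p^*}$, exactly as in Section \ref{sec:general-p} — local vanishing of $|v_n|^{p^*}$ upgrades to global vanishing, giving $\int_{\R^d}|v_n|^{p^*}\to 0$, hence $\int_{\R^d}|u_0|^{p^*} = 1$.

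Then I would invoke the Brezis--Lieb lemma for the $L^{p^*}$ norm, $\int |u_n|^{p^*} = \int|u_0|^{p^*} + \int|v_n|^{p^*} + o(1)$, which combined with $v_n \to 0$ in $L^{p^*}$ is consistent with the above, and the weak lower semicontinuity of the $\dot W^{s,p}$ seminorm under $u_n \wto u_0$, which gives $\|u_0\|_{\dot W^{s,p}}^p \le \liminf \|u_n\|_{\dot W^{s,p}}^p = S_{d,s,p}$. Since $\int|u_0|^{p^*} = 1$, the Sobolev inequality \eqref{eq:Sobolev-fractional} forces $\|u_0\|_{\dot W^{s,p}}^p \ge S_{d,s,p}$, so equality holds throughout: $u_0$ is an optimizer and $\|u_n\|_{\dot W^{s,p}} \to \|u_0\|_{\dot W^{s,p}}$. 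In the reflexive and uniformly convex Banach space $\dot W^{s,p}$ (for $1<p<\infty$), weak convergence together with convergence of norms implies strong convergence, so $u_n \to u_0$ strongly in $\dot W^{s,p}$. Unwinding the dilations and translations yields the relative compactness statement.

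The main obstacle I anticipate is the tightness step and the passage from local to global vanishing of $|v_n|^{p^*}$. In the case $p=2$ one can split the kinetic energy linearly as in \eqref{eq:S-p2-1}, but for general $p$ — and all the more so for the fractional seminorm, where the energy is a double integral with a nonlocal kernel — the naive splitting $\|u_n\|^p = \|u_0\|^p + \|v_n\|^p + o(1)$ fails, which is precisely the difficulty flagged after Lemma \ref{lem:concentration}. The way around it is not to split the energy at all, but to argue entirely at the level of the $L^{p^*}$ mass using \eqref{eq:strict-binding}: if a fixed fraction $\delta$ of the mass escaped to spatial infinity, one could decouple $u_n$ into two pieces with disjoint supports (up to small errors handled by a smooth cutoff and the fractional Leibniz-type estimates controlling the commutator of the cutoff with the $\dot W^{s,p}$ seminorm), and the superadditivity of $t \mapsto t^{p/p^*}$ encoded in \eqref{eq:strict-binding} would strictly lower the energy below $S_{d,s,p}$, contradicting minimality. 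Making the cutoff error estimates precise for the higher-order and fractional seminorms — i.e. bounding $\big\| \|\chi_R u_n\|_{\dot W^{s,p}}^p + \|(1-\chi_R) u_n\|_{\dot W^{s,p}}^p - \|u_n\|_{\dot W^{s,p}}^p \big\|$ by a quantity that is small uniformly in $n$ — is the technically delicate part, but it is a standard (if laborious) computation with the Gagliardo kernel and does not require any refinement of the Sobolev embedding.
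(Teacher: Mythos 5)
There is a fatal error at the very start: the claim that the embedding of $\dot W^{s,p}(\R^d)$ into $L^{p^*}_{\rm loc}$ is compact is false. The Rellich--Kondrachov theorem gives compactness of $W^{s,p}(\Omega)\hookrightarrow L^q(\Omega)$ for bounded $\Omega$ only when $q<p^*$; at the critical exponent $q=p^*$ compactness fails even locally (take a bump function dilated to concentrate at a point — it converges weakly to zero but its $L^{p^*}$ norm does not vanish). This loss of compactness at the critical exponent is precisely what the concentration-compactness method is designed to overcome, so asserting $u_n\to u_0$ strongly in $L^{p^*}_{\rm loc}$ assumes away the entire difficulty. Consequently your two central conclusions both fail: the step $\int_{B_1}|u_0|^{p^*}=\tfrac12>0$ (hence $u_0\neq 0$) is unjustified — the normalized $L^{p^*}$ mass can escape to a Dirac at the origin while $u_0=0$ — and the step where you rule out the alternative \eqref{eq:p-vn-delta} for $v_n=u_n-u_0$ is equally unjustified. (The logic there is also circular: if you truly had $v_n\to 0$ strongly in $L^{p^*}_{\rm loc}$, Lemma~\ref{lem:concentration} would be superfluous, since \eqref{eq:vn-Sobolev-phi-conclusion-1} would be immediate.) The compact embedding one is actually allowed to invoke is into $L^p_{\rm loc}$ (or $L^q_{\rm loc}$ for $q<p^*$), and this is indeed what the paper uses in Sections~\ref{sec:p=2} and~\ref{sec:general-p}.

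The correct route in the paper is to show that $u_0\neq 0$, tightness, and absence of delta-type concentration \emph{all} require genuine arguments that do not use any compactness at the critical exponent; they all go through the convexity inequality \eqref{eq:strict-binding} together with IMS-type cutoffs and the minimizing property, and the absence of a delta at small scales is handled by the ``generalized dichotomy'' \eqref{eq:p-un-gen-dichotomy} with shrinking annuli $R_n\to 0$ (not merely by a large-$R$ cutoff as in your tightness step). You correctly sense that the cutoff commutator estimates are the technically delicate point, but these are not a side issue: for integer $s>1$ they require a Gagliardo--Nirenberg interpolation to absorb lower-order terms, and this is a genuine new ingredient compared to $s=1$. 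Finally, the paper treats the integer and fractional cases by \emph{different} strategies — for $s\notin\mathbb{N}$ a Brezis--Lieb identity for the Gagliardo double-integral seminorm holds, which restores an analogue of \eqref{eq:S-p2-1} and lets the simpler $p=2$-style argument apply; for $s\in\mathbb{N}$ that identity is unavailable and one must run the full Theorem~\ref{thm:1} machinery with Lemma~\ref{lem:concentration-s} — a distinction your proposal does not draw. As written, the proposal does not have a valid argument that $u_0\neq 0$ nor that the delta alternative fails, so it does not prove the theorem.
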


This result is well-known. The case $1< s \in \mathbb{N}$ was already covered by Lions \cite{Lions-85}, which is particularly interesting since rearrangement techniques do not apply. The fractional case $s\in (0,1)$ was studied  by Zhang \cite{Zhang-21}, based on a refinement of Sobolev inequality in Morrey and Besov spaces, in the same spirit as by Palatucci and Pisante in \cite{PalPis-14}. We also refer to the work of Bahouri, Cohen and Koch \cite{BahCohKoc-11} for a general profile decomposition covering a larger range  of critical embedding of function spaces, including $\dot{W}^{s,p}(\R^d)$. 

Our proof of Theorem \ref{thm:2} in the integer case $s\in \mathbb{N}$ proceeds in the same way as the proof of Theorem \ref{thm:1} in Section \ref{sec:general-p}, where we only need a straightforward extension of Lemma \ref{lem:concentration}. The proof of Theorem \ref{thm:2} in the fractional case $s\notin \mathbb{N}$ is conceptually easier since the approach for $p=2$ applies. The reason is that an analogue of \eqref{eq:S-p2-1} for $\dot W^{s,p}(\R^d)$ holds due to the Brezis--Lieb lemma  \cite{BreLie-83}, which was also used in \cite{Zhang-21} for $0<s<1$.  

Finally, we refer to the recent work of Frank and Loss  \cite{FraLos-22} for an alternative proof of the compactness of Sobolev minimizing sequences using  Ekeland's variational principle. Moreover, we note that the compactness of minimizing sequences also holds for the sub-critical Sobolev embedding $W^{s,p}(\R^d) \subset L^q(\R^d)$, with $p < q < p^*$, where Lions' argument \cite{Lions-84b} is  simpler to apply.

\bigskip
\noindent
{\bf Notation:} We will denote by $B_R(x)$ the ball $\{y: |y-x|<R\}$ in $\R^d$, and write $B_R$ for $B_R(0)$. We write  $A\lesssim_d B$ if $A\le C_d B$ for a constant $C_d$ depending only on $d$. 

\bigskip
\noindent
{\bf Acknowledgements:} We would like to thank Rupert L. Frank, Patrick Gérard, Mathieu Lewin and Hoai-Minh Nguyen for helpful discussions and remarks. We thank the referee for valuable suggestions. Part of the work has been done when CD was a visiting reseacher at the Institut des Hautes Etudes and she would like to thank Laure Saint--Raymond for her support and hospitality. Partial support from 
the European Research Council via the ERC CoG RAMBAS (Project-Nr. 10104424) and from the Deutsche Forschungsgemeinschaft (DFG, German Research Foundation) via  the Germany’s
Excellence Strategy  (EXC-2111-390814868) and via TRR 352 (Project-ID 470903074) is acknowledged. CD also acknowledges the support from
Fondation Engie.

\section{The simplest case} \label{sec:p=2}

In this section we focus on the special case $p=2$ where \eqref{eq:Sobolev} states that 
\begin{align}\label{eq:Sobolev-p2}
\int_{\R^d} |\nabla u(x) |^{2} \d x \ge S_{d} \left( \int_{\R^d} |u(x)|^{2^*} \d x \right)^{2/2^*}, \quad \forall u \in \dot{H}^{1}(\R^d)
\end{align}
with $d\ge 3$ and $2^*=2d/(d-2)$. In this case, there exist refinements of \eqref{eq:Sobolev-p2} which help to prove the existence of optimizers \cite{GerMeyOru-97,PalPis-14,Lewin-10,LenLew-11,Frank-11,KilVis-13}. 
Our new proof below uses only elementary arguments. 

\begin{proof}[Proof of Theorem \ref{thm:1} for $p=2$] Fix $\delta_d>0$ small such that the ball $B_2 \subset \R^d$ is covered by less than $1/(2\delta_d)$ unit balls.   For any minimizing sequence $\{u_n\}_{n=1}^\infty \subset \dot{H}^{1}(\R^d)$, up to a translation,  dilation and multiplication by a constant, we can assume that
\begin{align}\label{eq:proof-simple=p2-input-1}
\int_{\R^d} |\nabla u_n|^2 \to S_d, \quad \int_{\R^d} |u_n|^{2^*}=1, \quad \int_{B_1} |u_n|^{2^*} = \sup_{x\in \R^d} \int_{B_1(x)} |u_n|^{2^*} = \delta_d
\end{align}
for all $n\in \mathbb{N}$. Then by the choice of $\delta_d$, 
\begin{align}\label{eq:proof-simple=p2-input-2}
\int_{B_2^c} |u_n|^{2^*} = 1- \int_{B_2} |u_n|^{2^*}  \ge 1- \frac{1}{2\delta_d} \sup_{x\in \R^d} \int_{B_1(x)} |u_n|^{2^*} \ge \frac{1}{2}. 
\end{align}
By the Banach--Alaoglu and the Sobolev embedding theorems, up to a subsequence, we can assume that there exists $u_0\in \dot{H}^1(\R^d)$ such that $u_n \wto u_0$ weakly in $\dot H^1(\R^d)$, $u_n\to u_0$ strongly in $L^2_{\rm loc}(\R^d)$ and $u_n(x)\to u_0(x)$ for a.e. $x\in \R^d$.  
We will first prove that $u_0\ne 0$, which is the key difficulty. Then the  optimality of $u_0$ is standard.  

\bigskip

\noindent
{\bf Step 1:} We prove that $u_0\ne 0$. Assume by contradiction that $u_0=0$. Fix a smooth partition of unity $\chi^2+\eta^2=1$ with $\1_{B_1}\le \chi \le \1_{B_2}$. Then we have 
\begin{align}\label{eq:IMS-p2}
\int_{\R^d} \left(  |\nabla u_n|^2 - |\nabla (\chi u_n)|^2 - |\nabla (\eta u_n)|^2  \right) = - \int_{\R^d} (|\nabla \chi|^2 + |\nabla \eta|^2 )|u_n|^2 \to 0
\end{align} 
since $\1_{B_2} u_n \to 0$ strongly in $L^2(\R^d)$. Consequently, by the Sobolev inequality, 
\begin{align}
\label{eq:S-p2-0}
1= S_d^{-1}\lim_{n\to \infty} \int_{\R^d} \left(  |\nabla (\chi u_n)|^2 + |\nabla (\eta u_n)|^2  \right) \ge \limsup_{n\to \infty} \left( \|\chi u_n\|^2_{L^{2^*}} + \|\eta u_n\|^2_{L^{2^*}} \right). 
\end{align}
On the other hand, note that by \eqref{eq:proof-simple=p2-input-1} and \eqref{eq:proof-simple=p2-input-2}, 
$$\|\chi u_n\|^{2^*}_{L^{2^*}} \ge \int_{B_1}|u_n|^{2^*} = \delta_d, \quad \|\eta u_n\|^{2^*}_{L^{2^*}} \ge \int_{B_2^c} |u_n|^2 \ge \frac{1}{2}.$$ 
Therefore, using \eqref{eq:strict-binding} (with $p=2$) and combining with \eqref{eq:S-p2-0} we find that 
$$
1 \ge  (1-\eps)^{-2/2^*} \limsup_{n\to \infty} \left( \|\chi u_n\|^{2^*}_{L^{2^*}} + \|\eta u_n\|^{2^*}_{L^{2^*}} \right)^{2/2^*} .
$$
Since $\int_{\R^d} |u_n|^{2^*}=1$,  we deduce that 
\begin{align}\label{eq:1-chi-eta}
\liminf_{n\to \infty} \int_{\R^d} (1-\chi^{2^*}-\eta^{2^*}) |u_n|^{2^*} \ge  \eps. 
\end{align}

Next, for every $L\in \mathbb{N}$, we can find a family of smooth functions $\chi_k^2+\eta_k^2=1$ for $k=1,2,...,L$ such that $\{1-\chi_k^{2^*}-\eta_k^{2^*}\}_{k=1}^L$ have disjoint supports in $B_2\backslash B_1$ (e.g. we  choose $\chi_k=1$ in $\{|x|<1+(k-1)/L\}$ and $\chi_k=0$ in $\{|x|>1+k/L\}$). Since 
$$\sum_{k=1}^L (1-\chi_k^{2^*}-\eta_k^{2^*}) \le \1_{B_2\backslash B_1},$$
by applying \eqref{eq:1-chi-eta} with $(\chi,\eta)$ replaced by $(\chi_k,\eta_k)$, we find that 
\begin{align}\label{eq:p2-nontrivial-weak-limit-final}
1 \ge  \liminf_{n\to \infty}\int_{B_2\backslash B_1}  |u_n|^{2^*} \ge  \liminf_{n\to \infty} \sum_{k=1}^L \int_{\R^d} (1-\chi_k^{2^*}-\eta_k^{2^*})  |u_n|^{2^*} \ge L\eps. 
\end{align}
By taking $L$ large enough such that $L\eps>1$, we get a contradiction. Thus  $u_0\ne 0$.

\bigskip
\noindent
{\bf Step 2:} Now we prove that $u_0$ is an optimizer and $u_n\to u_0$ strongly in $\dot H^1(\R^d)$ by using \eqref{eq:S-p2-1}. 
Since $u_n\to u_0$ pointwise almost everywhere, by the Brezis--Lieb lemma \cite{BreLie-83},  
\begin{align}\label{eq:S-p2-1b}
\int_{\R^d} \left(  |u_n|^{2^*} - |u_0|^{2^*} - |u_n-u_0|^{2^*}  \right) \to 0. 
\end{align}
Using the Sobolev inequality (twice, for $u_0$ and $u_n-u_0$)
and the (non-strict)  inequality 
\begin{align}\label{eq:non-strict-binding}
a^{2/2^*}+b^{2/2^*}  \ge (a+b)^{2/2^*},\quad \forall a,b\in [0,1],
\end{align}
we deduce from \eqref{eq:S-p2-1} and \eqref{eq:S-p2-1b} that 
\begin{align}\label{eq:S-p2-final}
1 &= S_d^{-1}  \lim_{n\to \infty} \int_{\R^d}  |\nabla u_n|^2 = S_d^{-1}  \lim_{n\to \infty} \left( \int_{\R^d}  |\nabla u_0|^2 +     \int_{\R^d}  |\nabla (u_n- u_0)|^2  \right) \\
& \ge   \liminf_{n\to \infty} ( \| u_0\|_{L^{2^*}}^2 +  \|u_n- u_0\|_{L^{2^*}}^2)  \ge \liminf_{n\to \infty} \left( \|  u_0\|_{L^{2^*}}^{2^*}+ \|u_n- u_0\|_{L^{2^*}}^{2^*}  \right)^{2/2^*} =1. \nonumber
\end{align}
In particular, we conclude that $\|\nabla u_0\|^2_{L^2}= S_d \| u_0\|_{L^{2^*}}^2$, namely $u_0$ is an optimizer for \eqref{eq:Sobolev}. Moreover, if $\|u_n-u_0\|_{L^{p*}} \ge \delta>0$ then we can use the strict inequality \eqref{eq:strict-binding} instead of \eqref{eq:non-strict-binding}, leading to a contradiction. Thus $\|u_n-u_0\|_{L^{2^*}}^2\to 0$. This implies $\|\nabla (u_n- u_0)\|_{L^2} \to 0$ due to \eqref{eq:S-p2-final}. 
\end{proof}


%
%

\section{Proof of Theorem \ref{thm:1}} \label{sec:general-p} 

Let us start with the proof of Lemma \ref{lem:concentration} which is of independent interest. 

\begin{proof}[Proof of Lemma \ref{lem:concentration}]  Assume that \eqref{eq:p-vn-delta} does not hold, namely
\begin{align}\label{eq:vn-eps-rn}
\eps_{r,n} := \sup_{y\in \R^d}\int_{B_r(y)} |v_n|^{p^*} \quad \text{ satisfies } \quad \lim_{r\to 0} \limsup_{n\to \infty} \eps_{r,n}=0.
\end{align}
Take a fixed function $\varphi\in C_c^\infty(\R^d)$ such that $\1_{B_1}\le \varphi\le \1_{B_2}$ and denote 
$$
\varphi_{r,y}(x) = \varphi((x-y)/r).
$$
Since $B_{2r}(y)$ can be covered by finitely many ball of radius $r$, we have
\begin{align}\label{eq:vn-eps-2rn}
\sup_{y\in \R^d}\int_{\R^d} |\varphi_{r,y} v_n|^{p^*}  \le \sup_{y\in \R^d}\int_{B_{2r}(y)}|v_n|^{p^*} \lesssim_d \sup_{y\in \R^d}\int_{B_{r}(y)}|v_n|^{p^*}   \lesssim_d  \eps_{r,n}. 
\end{align}
Combining \eqref{eq:vn-eps-2rn} and the Sobolev inequality \eqref{eq:Sobolev} we find that 
\begin{align}\label{eq:vn-Sobolev-phi}
\int_{\R^d} |\varphi_{r,y}  v_n|^{p^*} &\lesssim_d  \eps_{r,n}^{1-p/p^*}  \left( \int_{\R^d} |\varphi_{r,y}  v_n|^{p^*} \right)^{p/p^*} \lesssim_d  \eps_{r,n}^{1-p/p^*} \int_{\R^d} |\nabla (\varphi_{r,y} v_n)|^p \nn\\
&\lesssim_d  \eps_{r,n}^{1-p/p^*}  \int_{\R^d} (|\varphi_{r,y}|^p |\nabla v_n|^p + |\nabla \varphi_{r,y}|^p |v_n|^p)
\end{align}
for all $y\in \R^d$. Next, we integrate over $y\in B_{2R}$ with $R>r$ and use  
\begin{align*}
r^{d}\1_{B_R}(x) \lesssim_d  \int_{B_{2R}} |\varphi_{r,y}(x)|^{p^*} \d y  & \le \int_{B_{2R}} |\varphi_{r,y}(x)|^{p}  \d y \lesssim_d r^d \1_{B_{4R}}(x), \\
 \int_{\R^d} |\nabla \varphi_{r,y}(x)|^p  \d y &\lesssim_{d} r^{d-p} \1_{B_{4R}}(x)
\end{align*}
for all $x\in \R^d$. By Fubini's theorem, we conclude from \eqref{eq:vn-Sobolev-phi} that  
\begin{align}\label{eq:vn-Sobolev-phi-conclusion-1111}
\int_{B_R} |v_n|^{p^*} \lesssim_d  \eps_{r,n}^{1-p/p^*}  \left( \int_{B_{4R}}  |\nabla v_n|^p + r^{-p} \int_{B_{4R}} |v_n|^p\right). 
\end{align}
Since $|\nabla v_n|$ is bounded in $L^p(\R^d)$ and $v_n\to 0$ strongly in $L^p_{\rm loc}(\R^d)$ by the Sobolev embedding, from \eqref{eq:vn-Sobolev-phi-conclusion-1111} we can take $n\to 0$ and then  $r\to 0$ to obtain \eqref{eq:vn-Sobolev-phi-conclusion-1}. \end{proof}

\begin{proof}[Proof of Theorem \ref{thm:1} for $1< p<d$] 
For any minimizing sequence $\{u_n\}_{n=1}^\infty \subset \dot{W}^{1,p}(\R^d)$ for \eqref{eq:Sobolev}, up to a translation and a dilation, we can assume that
\begin{align} \label{eq:p-un-1}
\int_{\R^d} |\nabla u_n|^p \to S=S_{d,p}, \quad \int_{\R^d} |u_n|^{p^*}=1, \quad \int_{B_1} |u_n|^{p^*} =  \sup_{x\in \R^d} \int_{B_1(x)} |u_n|^{p^*} = \frac 1 2. 
\end{align}
By the Banach--Alaoglu theorem and the Sobolev embedding, up to a subsequence, we can assume that $u_n \wto u_0$ weakly in $\dot W^{1,p}(\R^d)$, strongly in $L^p_{\rm loc}(\R^d)$ and pointwise almost everywhere in $\R^d$.  We will prove that $u_0$ is an optimizer.

\bigskip
\noindent
{\bf Step 1:} We prove that the sequence $|u_n|^{p^*}$ is tight, namely 
\begin{align}\label{eq:p-un-tight} 
\lim_{R\to \infty} \limsup_{n\to \infty} \int_{B_R^c} |u_n|^{p^*} =0,
\end{align} 
and that $|u_n|^{p^*}$ does not have any delta-type concentration, namely 
\begin{align}\label{eq:p-un-no-delta} 
\lim_{r\to 0} \limsup_{n\to \infty} \sup_{x\in \R^d}\int_{B_r(x)} |u_n|^{p^*} =0. 
\end{align} 

Assume by contradiction that either \eqref{eq:p-un-tight} or \eqref{eq:p-un-no-delta} fails to hold. Then, up to a subsequence of $\{u_n\}$, there are $\delta>0$, $\{x_n\}\subset \R^d$ and $\{R_n\}\subset (0,\infty)$ such that  
\begin{align}\label{eq:p-un-gen-dichotomy} 
1-\delta \ge \int_{|x-x_n|<R_n} |u_n(x)|^{p^*} \d x \ge \delta,\quad \lim_{n\to \infty} \int_{R_n< |x-x_n|<2 R_n}  |u_n(x)|^{p^*} \d x =0. 
\end{align} 
In fact, if $\{u_n\}$ is not tight, namely  \eqref{eq:p-un-tight} fails, then up to a subsequence of $\{u_n\}$, 
\begin{align}\label{eq:p-un-tight-not} 
\int_{|x|> {2^{2n}}} |u_n|^{p^*} \ge \delta>0,\quad \forall n\in \mathbb{N}. 
\end{align} 
Thus \eqref{eq:p-un-gen-dichotomy} holds with $x_n=0$ and a suitable choice $R_n\in \{2^n, 2^{n+1},..., 2^{2n-1}\}$ such that 
\begin{align} \label{eq:p-un-tight-not-choice-Rn}
\int_{R_n<|x|<2 R_n} |u_n|^{p^*} \le \frac{1}{n} \sum_{k=n}^{2n-1}  \int_{2^k< |x|<2^{k+1}}  |u_n|^{p^*} = \frac{1}{n} \int_{2^n < |x|<2^{2n}}  |u_n|^{p^*}\le \frac{1}{n} \to 0.
\end{align} 
Similarly, if there is a delta-type concentration, namely \eqref{eq:p-un-no-delta} fails, then up to a subsequence of $\{u_n\}$, there exists a sequence $\{x_n\} \subset \R^d$ such that
\begin{align}\label{eq:p-un-no-delta-second} 
\int_{|x_n-x|<2^{-2n} } |u_n(x) |^{p^*} \d x \ge \delta >0,\quad \forall n\in \mathbb{N}. 
\end{align} 
We can verify \eqref{eq:p-un-gen-dichotomy} by choosing $R_n\in \{2^{-2n}, 2^{-2n+1},..., 2^{-n-1}\}$ similarly to \eqref{eq:p-un-tight-not-choice-Rn}. 
%

\medskip

Now let us derive a contradiction from \eqref{eq:p-un-gen-dichotomy}. Choose a smooth partition of unity $\chi^p+\eta^p=1$ with $\1_{B_1}\le \chi \le \1_{B_2}$ and denote 
\begin{align}\label{eq:def-chi-n-eta-n}
\chi_n(x)= \chi((x-x_n)/R_n), \quad \eta_n(x)= \eta((x-x_n)/R_n). 
\end{align}
Using 
H\"older's inequality, $\|\nabla \chi_n\|_{L^d(\R^d)}= \| \nabla \chi\|_{L^d(\R^d)}$ and \eqref{eq:p-un-gen-dichotomy}, we have 
\begin{align*}
\int_{\R^d} |\nabla \chi_n|^p |u_n|^p \le \|\nabla \chi_n\|_{L^d(\R^d)}^p \left( \int_{R_n<|x-x_n|<2R_n} |u_n|^{p^*}\right)^{p/p^*} \to 0.
\end{align*}
Therefore, by the triangle inequality 
\begin{align*}
\int_{\R^d} |\nabla (\chi_n u_n) |^p  =  \int_{\R^d} |\chi_n|^p | \nabla u_n|^p +o(1)_{n\to \infty}. 
\end{align*}
Combining with a similar formula for $\eta_n$ and using $\chi_n^p+\eta_n^p=1$, we obtain 
\begin{align}
\label{eq:S-p-1}
\int_{\R^d} |\nabla u_n|^p =  \int_{\R^d}  |\nabla (\chi_n u_n) |^p + \int_{\R^d}  |\nabla (\eta_n u_n)|^p  + o(1)_{n\to \infty}.  
\end{align}

Note that by \eqref{eq:p-un-gen-dichotomy} we also have 
\begin{align}
\label{eq:S-p-0}
\int_{\R^d} |\chi_n u_n|^{p^*} \ge \delta>0, \quad \int_{\R^d} |\eta_n u_n|^{p^*} \ge \delta + o(1)_{n\to \infty}.
\end{align}
Using \eqref{eq:S-p-1}, the Sobolev inequality (twice), and the strict inequality \eqref{eq:strict-binding} thanks to \eqref{eq:S-p-0}, we get
\begin{align}\label{eq:S-p-2}
1  &= S^{-1}\lim_{n\to \infty} \int_{\R^d} |\nabla u_n|^p = S^{-1}\lim_{n\to \infty} \left( \int_{\R^d}   |\nabla (\chi_n u_n) |^p  + \int_{\R^d} |\nabla (\eta_n u_n)|^p  \right) \nn \\
&\ge  \liminf_{n\to \infty}  \left( \|\chi_n u_n\|_{L^{p^*}}^{p} +  \|\eta_n u_n\|_{L^{p^*}}^{p}  \right) \nn\\
& \ge  \liminf_{n\to \infty}  (1-\eps)^{-p/p^*} \left( \|\chi_n u_n\|_{L^{p^*}}^{p^*}  + \|\eta_n u_n\|_{L^{p^*}}^{p^*} \right)^{p/p^*} = (1-\eps)^{-p/p^*}
\end{align}
for a constant $\eps\in (0,1)$ depending only on $\delta$ and $p/p^*$. Here in the last equality we used the second part of \eqref{eq:p-un-gen-dichotomy}. 
This is a contradiction since $(1-\eps)^{-p/p^*} >1$. Thus \eqref{eq:p-un-gen-dichotomy} cannot occur, and hence both  \eqref{eq:p-un-tight} and \eqref{eq:p-un-no-delta} hold true. 

\bigskip
\noindent
{\bf Step 2:} We consider the sequence $v_n=u_n - u_0$, which converges weakly to $0$ in $\dot{W}^{1,p}(\R^d)$ and converges to $0$ pointwise almost everywhere. By the Brezis--Lieb lemma, for any open set $\Omega\subset \R^d$ we have
\begin{align}\label{eq:Brezis--Lieb-vn}
 \int_{\Omega} |u_n|^{p^*} =  \int_{\Omega} |u|^{p^*} +  \int_{\Omega} |v_n|^{p^*} + o(1)_{n\to \infty}. 
\end{align}
By Step 1, the sequence $|u_n|^{p^*}$ is tight and has no delta-type concentration. Hence, \eqref{eq:Brezis--Lieb-vn} implies that $|v_n|^{p^*}$ is also tight and has no delta-type concentration. By Lemma \ref{lem:concentration}, $v_n\to 0$ strongly in $L^{p^*}(\R^d)$. Thus $u_n \to u_0$ strongly in $L^{p^*}(\R^d)$, and we find that 
$$
1=\lim_{n\to \infty}\|u_n\|_{L^{p^*}(\R^d)}^p =  \|u_0\|_{L^{p^*}(\R^d)}^p \le  S^{-1}\|\nabla u_0\|_{L^p(\R^d)}^p\le \liminf_{n\to \infty}  S^{-1}\|\nabla u_n\|_{L^p(\R^d)}^p =1.
$$

Here the last inequality follows from the assumption that $\nabla u_n \wto \nabla u_0$ weakly in $L^p(\R^d)$. This implies that  $u_0$ is an optimizer for \eqref{eq:Sobolev}. Moreover, $\|\nabla u_n\|_{L^p} \to \|\nabla u_0\|_{L^p}$ by the above argument, and hence $\nabla u_n \to \nabla u_0$ strongly for $L^p(\R^d)$. 
\end{proof}

\begin{remark} We can interpret \eqref{eq:p-un-gen-dichotomy} as a ``generalized dichotomy". More precisely, in Lions' terminology \cite{Lions-84}, the ``vanishing case" is already ruled out by the assumption \eqref{eq:p-un-1}. Hence, if $|u_n|^{p^*}$ is not tight, we have the ``standard dichotomy". Our observation is that the case of delta-type concentration can be treated similarly to the "standard dichotomy", namely both cases can be captured by \eqref{eq:p-un-gen-dichotomy} with the only difference being that $R_n\to \infty$ and $R_n \to 0$, respectively. This approach allows us to avoid having to discuss the limit of $|\nabla u_n|^p$ and $|u_n|^{p^*}$ as measures, as in \cite{Lions-85}. 
\end{remark}

\section{Proof of Theorem \ref{thm:2}} \label{sec:general-p-s} 

In this section we prove Theorem \ref{thm:2}. Recall that $d>s>0$, $d/s>p>1$ and $p^*=dp/(d-ps)$. We consider the cases $s\in \mathbb{N}$ and $s\notin \mathbb{N}$ separately.

\subsection{The integer case $s\in \mathbb{N}$} We have the following extension of Lemma \ref{lem:concentration}. 

\begin{lemma}\label{lem:concentration-s} Let $d\in \mathbb{N}$, $s>0$ and $1<p<d/s$. Assume that $\{v_n\}$ is bounded in $\dot W^{s,p}(\R^d)$ such that $v_n \wto 0$ weakly in $\dot{W}^{s,p}(\R^d)$. Then \eqref{eq:vn-Sobolev-phi-conclusion-1} or \eqref{eq:p-vn-delta} holds. 
\end{lemma}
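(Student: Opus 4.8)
The plan is to mimic the proof of Lemma~\ref{lem:concentration}, replacing the $\dot W^{1,p}$--Leibniz localization by a suitable localization of the $\dot W^{s,p}$--norm. Suppose \eqref{eq:p-vn-delta} fails, so that, as in \eqref{eq:vn-eps-rn}, $\eps_{r,n}:=\sup_{y\in\R^d}\int_{B_r(y)}|v_n|^{p^*}$ obeys $\lim_{r\to0}\limsup_{n\to\infty}\eps_{r,n}=0$. Fix $\varphi\in C_c^\infty(\R^d)$ with $\1_{B_1}\le\varphi\le\1_{B_2}$ and put $\varphi_{r,y}(x)=\varphi((x-y)/r)$. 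Exactly as there, $\int_{\R^d}|\varphi_{r,y}v_n|^{p^*}\lesssim_d\eps_{r,n}$, so \eqref{eq:Sobolev-fractional} gives $\int_{\R^d}|\varphi_{r,y}v_n|^{p^*}\lesssim_{d,s,p}\eps_{r,n}^{1-p/p^*}\,\|\varphi_{r,y}v_n\|_{\dot W^{s,p}(\R^d)}^p$. Writing $s=m+\sigma$ with $m=\lfloor s\rfloor$ and $\sigma\in[0,1)$, everything reduces to the localization estimate
\begin{equation}\label{eq:loc-s}
\|\varphi_{r,y}v_n\|_{\dot W^{s,p}(\R^d)}^p\lesssim_{d,s,p}\|v_n\|_{\dot W^{s,p}(B_{4r}(y))}^p+r^{-\sigma p}\|D^m v_n\|_{L^p(B_{4r}(y))}^p+r^{-sp}\|v_n\|_{L^p(B_{4r}(y))}^p,
\end{equation}
where, when $s=m\in\mathbb{N}$, the middle term is dropped and $\|v_n\|_{\dot W^{s,p}(B_{4r}(y))}^p$ means $\|D^m v_n\|_{L^p(B_{4r}(y))}^p$.

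Granting \eqref{eq:loc-s}, integrate the displayed inequality over $y\in B_{2R}$ with $0<r<R$. On the left, $\varphi_{r,y}\ge\1_{B_r(y)}$ and Fubini give $r^d\int_{B_R}|v_n|^{p^*}\lesssim_d\int_{B_{2R}}\int_{\R^d}|\varphi_{r,y}v_n|^{p^*}\,dy$; on the right, each norm in \eqref{eq:loc-s} is over $B_{4r}(y)\subset B_{3R}$, so Fubini together with $|\{y\in B_{2R}:x\in B_{4r}(y)\}|\lesssim_d r^d$ converts the three terms into $r^d\|v_n\|_{\dot W^{s,p}(\R^d)}^p$, $r^{d-\sigma p}\|D^m v_n\|_{L^p(B_{3R})}^p$ and $r^{d-sp}\|v_n\|_{L^p(B_{3R})}^p$. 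Dividing by $r^d$ yields, for all $0<r<R$,
\[
\int_{B_R}|v_n|^{p^*}\lesssim_{d,s,p,R}\eps_{r,n}^{1-p/p^*}\Big(\|v_n\|_{\dot W^{s,p}(\R^d)}^p+r^{-\sigma p}\|D^m v_n\|_{L^p(B_{3R})}^p+r^{-sp}\|v_n\|_{L^p(B_{3R})}^p\Big).
\]
Since $\{v_n\}$ is bounded in $\dot W^{s,p}(\R^d)$ and the embedding $\dot W^{s,p}(\R^d)\hookrightarrow L^{p^*}(\R^d)$ is continuous, $\{v_n\}$ is bounded in $L^{p^*}$ and $v_n\wto0$ in $L^{p^*}$; hence $\{v_n\}$ is bounded in $W^{s,p}(B_{3R})$ and the Rellich--Kondrachov theorem gives $\|v_n\|_{L^p(B_{3R})}\to0$, as well as $\|D^m v_n\|_{L^p(B_{3R})}\to0$ when $\sigma>0$ (using the compact embedding $W^{s,p}(B_{3R})\hookrightarrow\hookrightarrow W^{m,p}(B_{3R})$). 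Letting first $n\to\infty$ (the last two summands drop out because $\eps_{r,n}^{1-p/p^*}$ is bounded) and then $r\to0$ (so $\limsup_n\eps_{r,n}\to0$), we get $\limsup_{n\to\infty}\int_{B_R}|v_n|^{p^*}=0$ for every $R>0$, which is \eqref{eq:vn-Sobolev-phi-conclusion-1}.

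It remains to establish \eqref{eq:loc-s}. For $s=m\in\mathbb{N}$ it is straightforward: $D^m(\varphi_{r,y}v_n)$ is supported in $B_{2r}(y)$, the Leibniz rule and $\|D^k\varphi_{r,y}\|_{L^\infty}\lesssim_{d,m}r^{-k}$ give $\|\varphi_{r,y}v_n\|_{\dot W^{m,p}}^p\lesssim_{d,m,p}\sum_{k=0}^m r^{-kp}\|D^{m-k}v_n\|_{L^p(B_{2r}(y))}^p$, and the Gagliardo--Nirenberg interpolation inequality on $B_{2r}(y)$ (from the case $r=1$ by scaling), $\|D^{j}v_n\|_{L^p(B_{2r}(y))}\lesssim_{d,m,p}r^{m-j}\|D^m v_n\|_{L^p(B_{2r}(y))}+r^{-j}\|v_n\|_{L^p(B_{2r}(y))}$, absorbs the intermediate derivatives into the first and third terms of \eqref{eq:loc-s}. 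In this case the scheme above reproduces, almost verbatim, the proof of Lemma~\ref{lem:concentration}, and this suffices for Theorem~\ref{thm:2} when $s\in\mathbb{N}$.

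The genuinely new point, and the main obstacle, is the nonlocality of the Gagliardo seminorm when $\sigma\in(0,1)$. It is dealt with by the following observation: with $f:=\varphi_{r,y}v_n$ supported in $B_{2r}(y)$, in the double integral defining $\|D^m f\|_{\dot W^{\sigma,p}(\R^d)}^p$ the integrand vanishes unless $x\in B_{2r}(y)$ or $x'\in B_{2r}(y)$, and outside $B_{4r}(y)\times B_{4r}(y)$ it contributes, by symmetry, at most $2\int_{B_{2r}(y)}|D^m f(x)|^p\int_{|x-x'|\ge2r}|x-x'|^{-d-\sigma p}\,dx'\,dx\lesssim_{d,\sigma,p}r^{-\sigma p}\|D^m f\|_{L^p(B_{2r}(y))}^p$. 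Thus $\|f\|_{\dot W^{s,p}(\R^d)}^p\lesssim\|f\|_{\dot W^{s,p}(B_{4r}(y))}^p+r^{-\sigma p}\|D^m f\|_{L^p(B_{2r}(y))}^p$, and both terms are now genuinely local. On $B_{4r}(y)$ one expands $D^m f$ by the Leibniz rule, estimates each product $D^\alpha\varphi_{r,y}\cdot D^\beta v_n$ ($|\alpha|+|\beta|=m$) using $\|D^\alpha\varphi_{r,y}\|_{L^\infty}\lesssim r^{-|\alpha|}$, the Lipschitz bound $|D^\alpha\varphi_{r,y}(x)-D^\alpha\varphi_{r,y}(x')|\lesssim r^{-|\alpha|-1}|x-x'|$, and $\sigma<1$, and then reduces every intermediate-order norm $\|D^{m-k}v_n\|_{L^p(B_{4r}(y))}$ and $\|D^{m-k}v_n\|_{\dot W^{\sigma,p}(B_{4r}(y))}$ ($1\le k\le m$) to $\|D^m v_n\|_{L^p(B_{4r}(y))}$ and $\|v_n\|_{L^p(B_{4r}(y))}$ via Gagliardo--Nirenberg on $B_{4r}(y)$ and the fractional interpolation inequality $[w]_{\dot W^{\sigma,p}(B_\rho)}\lesssim\rho^{-\sigma}\|w\|_{L^p(B_\rho)}^{1-\sigma}\big(\|w\|_{L^p(B_\rho)}+\rho\|\nabla w\|_{L^p(B_\rho)}\big)^{\sigma}$. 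The exponent arithmetic closes because a $k$-th order derivative costs $r^{-k}$, each fractional difference costs $r^{-\sigma}$, and $mp+\sigma p=sp$. I expect this power bookkeeping, together with checking the interpolation inequalities on balls, to be the only technically delicate ingredient; the remainder of the argument is parallel to the proof of Lemma~\ref{lem:concentration}.
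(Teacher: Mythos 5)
Your argument is correct, and for the integer case $s\in\mathbb{N}$ (the only case the paper later invokes) it follows essentially the same scheme as the paper's proof: localize with $\varphi_{r,y}$, apply the Sobolev inequality \eqref{eq:Sobolev-fractional}, and average over $y\in B_{2R}$ via Fubini. The only difference there is bookkeeping: you absorb the intermediate derivatives into $\|D^m v_n\|_{L^p}$ and $\|v_n\|_{L^p}$ by Gagliardo--Nirenberg on balls \emph{before} integrating in $y$, whereas the paper keeps all terms $r^{-jp}\int_{B_{4R}}|D^{s-j}v_n|^p$, $1\le j\le s$, and lets each vanish as $n\to\infty$ by local compactness; both routes work equally well. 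Where you genuinely go beyond the paper is the fractional case $\sigma\in(0,1)$: the paper's proof of Lemma \ref{lem:concentration-s} is written with the Leibniz rule for $D^s(\varphi_{r,y}v_n)$ and so only covers $s\in\mathbb{N}$ (the fractional case of Theorem \ref{thm:2} is treated there by a different route, via Lemma \ref{lem:IMS} and Brezis--Lieb, so the lemma is never used with $s\notin\mathbb{N}$), while your tail estimate for the Gagliardo seminorm of a function supported in $B_{2r}(y)$, together with the product/commutator estimate (using $\sigma<1$) and the scaled interpolation inequalities on balls, yields a proof matching the lemma's stated generality $s>0$ --- a worthwhile addition. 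Two minor points to fix in a write-up: for $y\in B_{2R}$ and $r<R$ you only get $B_{4r}(y)\subset B_{6R}$, not $B_{3R}$ (harmless, since $r\to0$ anyway); and the assertion that $\{v_n\}$ is bounded in $W^{s,p}(B_{3R})$, so that Rellich gives $\|v_n\|_{L^p}\to0$ and $\|D^m v_n\|_{L^p}\to0$ locally, tacitly uses the standard fact that the global Gagliardo seminorm together with the $L^{p^*}$ bound controls the full local $W^{s,p}$ norm (e.g. via a Poincar\'e-type inequality for the seminorm and testing against a cutoff); the paper glosses over the analogous point with ``by the Sobolev embedding'', so your level of detail is comparable, but a sentence acknowledging it would not hurt.
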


\begin{proof}
The proof goes similarly. We prove that if \eqref{eq:p-vn-delta} does not hold, then \eqref{eq:vn-Sobolev-phi-conclusion-1} holds. The only nontrivial change is in \eqref{eq:vn-Sobolev-phi} where we use now
\begin{align}\label{eq:vn-Sobolev-phi-s}
\int_{\R^d} |\varphi_{r,y}  v_n|^{p^*} &\lesssim_d  \eps_{r,n}^{1-p/p^*}  \left( \int_{\R^d} |\varphi_{r,y}  v_n|^{p^*} \right)^{p/p^*} \lesssim_d  \eps_{r,n}^{1-p/p^*} \int_{\R^d} |D^s (\varphi_{r,y} v_n)|^p \nn\\
&\lesssim_d  \eps_{r,n}^{1-p/p^*}  \sum_{j=0}^s \int_{\R^d} |D^j \varphi_{r,y}(x)|^p | D^{s-j}v_n(x)|^p \d x. 
\end{align}
Then we integrate over $y\in B_{2R}$ with $R>r$ and obtain 
\begin{align}\label{eq:vn-Sobolev-phi-conclusion}
\int_{B_R} |v_n|^{p^*} \lesssim_d  \eps_{r,n}^{1-p/p^*}  \int_{B_{4R}}  |D^s v_n|^p + \sum_{j=1}^{s} r^{-jp} \int_{B_{4R}} |D^{s-j}v_n|^p. 
\end{align}
By the Sobolev embedding, $\int_{B_{4R}} |D^{s-j}v_n|^p \to 0$ when $n\to \infty$. Thus we can take $n\to \infty$, then $r\to 0$ and obtain \eqref{eq:vn-Sobolev-phi-conclusion-1} as desired. 
\end{proof}

\begin{proof}[Proof of Theorem \ref{thm:2} for $s\in \mathbb{N}$] We consider a minimizing sequence $\{u_n\}_{n=1}^\infty \subset \dot{W}^{s,p}(\R^d)$ for \eqref{eq:Sobolev-fractional} such that 
\begin{align} \label{eq:p-un-1-s}
\|D^s u_n\|_{\dot W^{s,p}(\R^d)}^p \to S_{d,s,p}, \quad \int_{\R^d} |u_n|^{p^*}=1, \quad \int_{B_1} |u_n|^{p^*} =  \sup_{x\in \R^d} \int_{B_1(x)} |u_n|^{p^*} = \frac 1 2.
\end{align}
Moreover, up to subsequences and symmetries, we can assume that $u_n \wto u_0$ weakly in $\dot W^{s,p}(\R^d)$, strongly in $L^p_{\rm loc}(\R^d)$ and pointwise almost everywhere in $\R^d$.  

First, we can show that $|u_n|^{p^*}$ is tight and has no delta-type concentration. By arguing as in the proof of Theorem \ref{thm:1}, if either \eqref{eq:p-un-tight} or \eqref{eq:p-un-no-delta} fails, then \eqref{eq:p-un-gen-dichotomy} must hold. Then we need the following analogue of \eqref{eq:S-p-1}:
 \begin{align} \label{eq:S-p-1-s}
\int_{\R^d} \left(  |D^s u_n|^p - |D^s (\chi_n u_n) |^p - |D^s (\eta_n u_n)|^p  \right) \to 0
\end{align}
with $\chi_n,\eta_n$ in \eqref{eq:def-chi-n-eta-n}. In fact, by the triangle inequality and H\"older's inequality 
  we get 
\begin{align*}
&\left| \| D^s (\chi_n u_n)  \|_{L^p(\R^d)} - \| \chi_n D^s u_n\|_{L^p(\R^d)} \right|^p \lesssim_d \sum_{j=1}^s \int_{\R^d} |D^j \chi_n|^p |D^{s-j} (\tilde \chi_n u_n)  |^p \\
&\le  \sum_{j=1}^s  \left(  \int_{\R^d} |D^j \chi_n|^{d/j} \right)^{jp/d}  \left( \int_{\R^d} |D^{s-j} (\tilde \chi_n u_n)  |^{dp/(d-jp)} \right)^{1-jp/d}\\
&\lesssim_{d,s,p}     \sum_{j=1}^s  \left(  \int_{\R^d} |D^j \chi|^{d/j} \right)^{jp/d}  \| \tilde \chi_n  u_n \|_{L^{p^*}(\R^d)}^{p\theta_j} \| D^{s} (\tilde \chi_n  u_n) \|_{L^{p}(\R^d)}^{p(1-\theta_j)}  \to 0
\end{align*}
 Here we took $\tilde  \chi_n(x)=\tilde \chi(x/R_n)$ with $R_n$ in \eqref{eq:p-un-gen-dichotomy} and a fixed smooth function $\tilde \chi$ such that $\1_{B_{2}\backslash B_1} \le  \tilde \chi \le \1_{B_{3}}$ (so that $\tilde \chi_n=1$ in the support of $|\nabla \chi_n|$). We also used $\|D^j \chi_n\|_{L^{d/j}}=\|D^j \chi\|_{L^{d/j}}$ and the Gagliardo--Nirenberg inequality \cite{Gagliardo-59,Nirenberg-59} 
 $$
 \| D^{s-j} f \|_{L^{dp/(d-jp)}(\R^d)} \lesssim_{d,s,p} \|f\|^{\theta_j}_{L^{p^*} (\R^d)}  \| D^s f \|^{1-\theta_j}_{L^{p}(\R^d)}
 $$
 with suitable $\theta_j\in (0,1)$ for all $j\in \{0,1,2,...,s\}$. Combining with a similar bound for $\eta_n$, we obtain \eqref{eq:S-p-1-s}. From  \eqref{eq:p-un-gen-dichotomy} and \eqref{eq:S-p-1-s}, we deduce a contradiction similar to \eqref{eq:S-p-2}.

Thus, both  \eqref{eq:p-un-tight} and \eqref{eq:p-un-no-delta} hold true. Consequently, $v_n=u_n-u_0 \to 0$ strongly in $L^{p^*}(\R^d)$ by Lemma \ref{lem:concentration-s}. Therefore, we conclude that $u_0$ is a minimizer and $u_n\to u_0$ strongly in $\dot W^{s,p}(\R^d)$, as in the proof of Theorem \ref{thm:1}. 
\end{proof}

\subsection{The fractional case $s\notin \mathbb{N}$} Assume that $s=m+\sigma$ with $m\in \{0,1,2,...\}$ and $\sigma\in (0,1)$. This case is indeed easier since we can follow the strategy in Section \ref{sec:p=2}. The only additional complication is the following analogue of \eqref{eq:IMS-p2}, which will be proved later. 

\begin{lemma}\label{lem:IMS} If $u_n\wto 0$ in $\dot W^{s,p}(\R^d)$, then for every partition function $\chi^p+\eta^p=1$ with $\1_{B_1}\le \chi \le \1_{B_2}$ we have 
$$
\| u_n \|_{\dot W^{s,p}(\R^d)}^p = \| \chi u_n \|_{\dot W^{s,p}(\R^d)}^p +  \| \eta u_n \|_{\dot W^{s,p}(\R^d)}^p + o(1)_{n\to \infty}. 
$$
\end{lemma}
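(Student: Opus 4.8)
\textbf{Proof proposal for Lemma \ref{lem:IMS}.}

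The plan is to reduce the claim to an almost-orthogonality statement for the Gagliardo seminorm and then exploit the weak convergence $u_n\wto 0$ together with the local compactness $\dot W^{s,p}(\R^d)\hookrightarrow L^p_{\loc}(\R^d)$. Write $s=m+\sigma$ and set $w_n=D^m u_n$, so that $\|u_n\|_{\dot W^{s,p}}^p=\int\!\!\int |w_n(x)-w_n(y)|^p|x-y|^{-d-p\sigma}\,\d x\,\d y$. Using the Leibniz rule, $D^m(\chi u_n)=\chi\,w_n + \sum_{0\le j<m}\binom{m}{j}D^{m-j}\chi\,D^j u_n$, and similarly for $\eta u_n$; since each $D^{m-j}\chi$ is a bounded compactly supported function and $D^j u_n\to 0$ strongly in $L^p_{\loc}$ for $j<m$ (by the Sobolev embedding, as in the proof of Lemma \ref{lem:concentration-s}), these lower-order terms tend to $0$ in $L^p(\R^d)$ and, by the fractional Leibniz/Sobolev estimates used for \eqref{eq:S-p-1-s}, they also tend to $0$ in $\dot W^{\sigma,p}$. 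Hence it suffices to prove
\begin{align*}
\int_{\R^d}\!\!\int_{\R^d}\frac{|(\chi w_n)(x)-(\chi w_n)(y)|^p + |(\eta w_n)(x)-(\eta w_n)(y)|^p - |w_n(x)-w_n(y)|^p}{|x-y|^{d+p\sigma}}\,\d x\,\d y\to 0.
\end{align*}

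Next I would split the domain $\R^d\times\R^d$ according to whether $|x-y|$ is small or large. For the \emph{far region} $|x-y|\ge 1$ (or more generally $|x-y|\ge R_0$ for a large fixed $R_0$), the kernel $|x-y|^{-d-p\sigma}$ is integrable in the difference variable, and one bounds the integrand crudely by $\lesssim_p (|w_n(x)|^p+|w_n(y)|^p)\,\1_{B_3}(x\text{ or }y)$ using $\supp\chi,\supp\eta\subset B_2$ and $\chi^p+\eta^p=1$; since $w_n=D^m u_n\to 0$ strongly in $L^p(B_3)$, this contribution vanishes. For the \emph{near region} $|x-y|<R_0$, I would use the pointwise algebraic identity coming from $\chi(x)^p+\eta(x)^p=1$: writing $a=w_n(x)$, $a'=w_n(y)$, $t=\chi(x)$, $t'=\chi(y)$ (so the $\eta$-values are $(1-t^p)^{1/p}$, $(1-t'^p)^{1/p}$), the quantity
\begin{align*}
F(a,a',t,t') := |ta-t'a'|^p + \big|(1-t^p)^{1/p}a-(1-t'^p)^{1/p}a'\big|^p - |a-a'|^p
\end{align*}
vanishes when $t=t'$, and is controlled by $F(a,a',t,t')\lesssim_p |t-t'|\,( |a|^p+|a'|^p)$ when, say, $|t-t'|$ is bounded — this is the $p$-th power analogue of the IMS localization identity and follows by expanding and using $\big||z|^p-|z'|^p\big|\lesssim_p(|z|^{p-1}+|z'|^{p-1})|z-z'|$ together with Lipschitz continuity of $t\mapsto(1-t^p)^{1/p}$ on compact subsets. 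Since $\chi$ is smooth, $|t-t'|=|\chi(x)-\chi(y)|\lesssim |x-y|$, so on the near region the integrand is $\lesssim |x-y|^{1-d-p\sigma}(|w_n(x)|^p+|w_n(y)|^p)\1_{B_3}$, which after integrating the difference variable over $|x-y|<R_0$ (finite since $1-d-p\sigma>-d$) gives $\lesssim_d \int_{B_3}|w_n|^p\to 0$.

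The main obstacle is the near-region estimate: one must verify that the gradient bound on $\chi$ really does produce an \emph{integrable} kernel $|x-y|^{1-d-p\sigma}$ near the diagonal — which is exactly where the fractional exponent $\sigma<1$ matters, since $1-d-p\sigma>-d\iff p\sigma<1$ would be needed for full strength, but in fact one only needs $\sigma<1$ after noting that the factor $\min(|x-y|,1)$ coming from $|\chi(x)-\chi(y)|$ can be kept as $|x-y|$ only on the near region and the whole argument localizes the $w_n$-mass to $B_3$ where strong $L^p$-convergence is available; when $p\sigma\ge 1$ one instead uses $|\chi(x)-\chi(y)|^p\lesssim\min(|x-y|^p,1)$ and splits further, but the compact support of $\chi,\eta$ always confines at least one of $x,y$ to $B_3$, so the kernel integrates to a finite constant times $\int_{B_3}|w_n|^p$. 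Everything else is routine: assembling the far and near contributions, adding back the lower-order Leibniz terms (which vanish by the same mechanism as in \eqref{eq:S-p-1-s}), and recalling $w_n=D^m u_n$ yields the claimed asymptotic splitting of $\|u_n\|_{\dot W^{s,p}}^p$.
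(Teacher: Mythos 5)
Your overall strategy — reduce to $w_n=D^m u_n$ via Leibniz, push the lower‐order terms to zero, and then handle the Gagliardo seminorm of $w_n$ — mirrors the paper's two-step reduction (\eqref{eq:IMS-p-1} and \eqref{eq:IMS-p-2}). But the near-diagonal estimate at the heart of your argument for the main term has a genuine gap that the patch you sketch does not close.

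Your pointwise bound $F(a,a',t,t')\lesssim_p |t-t'|\,(|a|^p+|a'|^p)$ is true, but it only gains one power of $|t-t'|$, and this cannot be upgraded: differentiating $|ta-t'a'|^p$ in $t'$ produces a cross term of size $|t-t'|\cdot|a'|\cdot|a-a'|^{p-1}$, and absorbing $|a-a'|^{p-1}$ into $|a|^p+|a'|^p$ by Young costs exactly the extra powers of $|t-t'|$ you would need. Since $|\chi(x)-\chi(y)|\lesssim|x-y|$, your near-region kernel becomes $|x-y|^{1-d-p\sigma}$, whose integral over a small ball converges only when $p\sigma<1$. The hypotheses allow $p\sigma$ to be any value below $d$ (take, e.g., $d=5$, $m=0$, $\sigma=0.9$, $p=2$, so $p\sigma=1.8$), so this is a real failure, and it is a divergence at $x=y$, which the compact-support observation about $B_3$ does nothing to cure. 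Likewise, the alternative $|\chi(x)-\chi(y)|^p\lesssim\min(|x-y|^p,1)$ cannot be fed into your $F$-bound, because $F$ simply does not carry the $p$-th power of $|t-t'|$.

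The correct move — and what the paper does — is to avoid expanding the $p$-th power pointwise and instead compare at the level of $L^p$ norms with respect to the measure $|x-y|^{-d-p\sigma}\,\d x\,\d y$. Writing $f(x)w_n(x)-f(y)w_n(y)=(f(x)-f(y))w_n(x)+f(y)(w_n(x)-w_n(y))$, one shows that the first piece has $L^p$-norm (w.r.t.\ that measure, on $\{|x|<|y|\}$) tending to $0$, using the estimate
\begin{align*}
\int_{\R^d}\frac{|f(x)-f(y)|^p}{|x-y|^{d+p\sigma}}\,\d y\lesssim_f\int_{\R^d}\frac{\min(1,|z|)^p}{|z|^{d+p\sigma}}\,\d z\lesssim 1,
\end{align*}
which is finite for every $\sigma\in(0,1)$ because near the origin the integrand is $|z|^{p(1-\sigma)-d}$ and $p(1-\sigma)>0$. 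Here the full exponent $p$ on $|f(x)-f(y)|$ is decisive, and on $\{|x|<|y|\}$ the difference $f(x)-f(y)$ forces $|x|<2$, so the bound reduces to $\int_{B_2}|D^m u_n|^p\to 0$. Then, since $\|A_n\|^p-\|B_n\|^p\to 0$ whenever $\|A_n-B_n\|\to 0$ and both norms are bounded, summing over $f\in\{\chi,\eta\}$ and using $\chi^p(y)+\eta^p(y)=1$ gives \eqref{eq:IMS-p-1}. Equivalently, you can keep the pointwise inequality $\big||A+B|^p-|A|^p\big|\le p(|A|+|B|)^{p-1}|B|$ and then apply H\"older with exponents $p/(p-1)$ and $p$ inside the double integral, which keeps $|f(x)-f(y)|^p$ intact in the $B$-factor; your Young-type reduction to $(|a|^p+|a'|^p)$ threw away precisely this structure.

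Finally, your treatment of the Leibniz lower-order terms is only gestured at; the paper's \eqref{eq:IMS-p-2} carries this out by localizing $u_n$ with a fixed cutoff $g$ equal to $1$ on $B_2$, applying the same nonlocal estimate to the coefficients $f_\alpha$, and using the compact Sobolev embedding $gu_n\to 0$ in $W^{m',p}$ for $m'<m$.
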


\begin{proof}[Proof of Theorem \ref{thm:2} for $s\notin \mathbb{N}$] Let $u_n$ be a minimizing sequence such that 
\begin{align} \label{eq:p-s-input}
\int_{\R^d} |u_n|^{p^*}=1, \quad \int_{B_1} |u_n|^{p^*} =  \sup_{x\in \R^d} \int_{B_1(x)} |u_n|^{p^*} = \delta_d>0, \quad \int_{B_2^c} |u_n|^{p^*} \ge \frac{1}{2}
\end{align}
for all $n\in \mathbb{N}$ (c.f. \eqref{eq:proof-simple=p2-input-1} and \eqref{eq:proof-simple=p2-input-2}). We also assume that $u_n \wto u_0$ weakly in $\dot W^{s,p}(\R^d)$ and $D^m u_n(x)\to D^m u_0(x)$ pointwise almost everywhere in $\R^d$.  

\medskip

If $u_0=0$, then we can apply Lemma \ref{lem:IMS} and the Sobolev inequality \eqref{eq:Sobolev-fractional} to bound 
\begin{align*}
1= S_{d,s,p}^{-1}\lim_{n\to \infty} \int_{\R^d} \left(  \| \chi u_n \|_{\dot W^{s,p}(\R^d)}^p + \| \eta u_n\|_{\dot W^{s,p}(\R^d)}^p  \right) \ge \limsup_{n\to \infty} \left( \|\chi u_n\|^p_{L^{p^*}} + \|\eta u_n\|^p_{L^{p^*}} \right). 
\end{align*}
Using \eqref{eq:p-s-input} and the strict inequality \eqref{eq:strict-binding}, we can find a suitable choice $(\chi, \eta)$ such that $1-\chi-\eta$ is supported in $B_2\backslash B_1$ to get a contradiction as in \eqref{eq:p2-nontrivial-weak-limit-final}.  Thus  $u_0\ne 0$. 

Since $D^m u_n(x)\to D^m u_0(x)$ pointwise,  by applying the Brezis--Lieb lemma for $(D^m u_n(x) - D^m u_n(y))/ |x-y|^{d/p+s}$ in $L^p(\R^d\times \R^d)$, we have
\begin{align*}
&\| u_n\|_{\dot W^{s,p}(\R^d)}^p - \| u_0 \|_{\dot W^{s,p}(\R^d)}^p - \| u_n - u_0 \|_{\dot W^{s,p}(\R^d)}^p \nonumber\\
&=\| D^m u_n\|_{\dot W^{\sigma,p}(\R^d)}^p - \| D^m u_0 \|_{\dot W^{\sigma ,p}(\R^d)}^p - \| D^m u_n - D^m u_0 \|_{\dot W^{\sigma,p}(\R^d)}^p \to 0. 
\end{align*}
Arguing similarly \eqref{eq:S-p2-final}, we conclude that $u_0$ is an optimizer and $u_n\to u_0$ strongly in $\dot W^{s,p}(\R^d)$. 
\end{proof}

\begin{proof}[Proof of Lemma \ref{lem:IMS}] It suffices to prove that
\begin{align}\label{eq:IMS-p-1}
\| D^m u_n \|_{\dot W^{\sigma,p}(\R^d)}^p  = \| \chi D^m u_n \|_{\dot W^{\sigma,p}(\R^d)}^p +  \| \eta D^m u_n \|_{\dot W^{\sigma,p}(\R^d)}^p  + o(1)_{n\to \infty} 
 \end{align}
and for $f\in \{\chi,\eta\}$, 
\begin{align}\label{eq:IMS-p-2}
\| D^m (fu_n) \|_{\dot W^{\sigma,p}(\R^d)}^p =   \| f D^m u_n \|_{\dot W^{\sigma,p}(\R^d)}^p + o(1)_{n\to \infty} . 
 \end{align}

First, consider \eqref{eq:IMS-p-1}. Since $f\in \{\chi,\eta\}$ is bounded and Lipschitz, we have  
\begin{align} \label{eq:int-chi-nonlocal}
\int_{\R^d} \frac{| f(x)- f(y) |^p} {|x-y|^{d+p\sigma}} \d y \lesssim_f \int_{\R^d} \frac{\min (1,|x-y|)^p} {|x-y|^{d+p\sigma}} \d y =  \int_{\R^d} \frac{\min (1,|z|)^p} {|z|^{d+p\sigma}} \d z  \lesssim 1.
\end{align}
Consequently, 
\begin{align*}
\int_{|x|< |y|} \frac{| (f(x)- f(y))D^m u_n(x)|^p}{|x-y|^{d+p\sigma}} \d x \d y &\le \int_{|x|<2} \int_{\R^d}  \frac{ |f(x)- f(y)|^p |D^m u_n(x)|^p}{|x-y|^{d+p\sigma}} \d x \d y\\
&\lesssim \int_{|x|<2} |D^m u_n(x)|^p \d x \to 0. 
\end{align*}
By the triangle inequality, the latter implies that  
\begin{align*}
& \int_{|x|<|y|} \frac{| f(y) D^m u_n(x) - f(y)  D^m u_n(y)|^p}{|x-y|^{d+p\sigma}} \d x \d y  \\
&= \int_{\R^d} \int_{|x|<|y|} \frac{| f(x) D^m u_n(x) - f(y)  D^m u_n(y)|^p}{|x-y|^{d+p\sigma}} \d x \d y + o(1)_{n\to \infty}. 
\end{align*}
Summing these formulas over $f\in \{\chi,\eta\}$ and using $\chi^p(x)+\eta^p(x)=1$ we get
\begin{align*}
&\int_{|x|<|y|} \frac{| D^m u_n(x) -D^m u_n(y)|^p}{|x-y|^{d+p\sigma}} \d x \d y \\
&= \sum_{f\in \chi,\eta} \int_{|x|<|y|} \frac{| f(x) D^m u_n(x) - f(y)  D^m u_n(y)|^p  }{|x-y|^{d+p\sigma}} \d x \d y  + o(1)_{n\to \infty}. 
\end{align*}
We have the same formula for the domain $\{|y|<|x|\}$. Therefore, \eqref{eq:IMS-p-1} holds. 

Next, consider  \eqref{eq:IMS-p-2}. The components of the vector $D^m (f u_n) - f D^m u_n$ contain finite functions of the form $f_\alpha D^\alpha u_n$ with $1\le |\alpha|\le m-1$, where $f_\alpha$ is involving the derivatives of order $m-|\alpha|$ of $f\in \{\chi,\eta\}$. In particular, $f_\alpha$ is bounded, Lipschitz and supported in $\overline{B_2}\backslash B_1$.  Therefore, we can write $f_\alpha D^\alpha u_n = f_\alpha D^\alpha ( g u_n)$ for a fixed function $g\in C_c^\infty(\R^d)$ such that $g=1$ in $B_2$. By the triangle inequality, we have 
$$
 \| D^m (f u_n) - f D^m u_n \|_{\dot W^{\sigma,p}(\R^d)} \lesssim  \sum_{0 \le |\alpha|\le m-1} \|  f_\alpha D^\alpha (g u_n) \|_{\dot W^{\sigma,p}(\R^d)}.  
$$
By following the proof of \eqref{eq:IMS-p-1} and using \eqref{eq:int-chi-nonlocal} with $f$ replaced by $f_\alpha$, we get
 \begin{align*}
& \|  f_\alpha D^\alpha (g u_n) \|_{\dot W^{\sigma,p}(\R^d)}^p =  \int_{\R^d}\int_{\R^d} \frac{ | f_\alpha(x) D^\alpha (g u_n)(x) - f_\alpha(x) D^\alpha (g u_n)(y) |^p}{|x-y|^{d+p\sigma}} \d x \d y \\
 &\lesssim_p  \int_{\R^d}\int_{\R^d} \frac{ |f_\alpha(x)-f_\alpha(y)|^p |D^\alpha (g u_n)(x)|  + |f^\alpha(y)|^p |D^\alpha (g u_n)(x) - D^\alpha (g u_n)(y))|^p}{|x-y|^{d+p\sigma}} \d x \d y\\ 
 &\lesssim  \int_{\R^d }  |D^\alpha (g u_n)(x)|^p  \d x + \|f^\alpha\|^p_{L^\infty} \| D^\alpha (g u_n)\|_{\dot W^{\sigma,p}(\R^d)}^p \to 0. 
 \end{align*}
Here we used the Sobolev embedding to get $g u_n\to 0$ strongly in  $W^{m',p}(\R^d)$ for all $m'=0,1,...,m-1$. Thus \eqref{eq:IMS-p-2} follows. The proof of Lemma \ref{lem:IMS} is complete. 
\end{proof}

\end{document}